\journal{a suitable journal}
\theoremstyle{plain}                
\newtheorem{thm}{Theorem}[section]  
\newtheorem{prop}[thm]{Proposition}
\newtheorem{cor}[thm]{Corollary}
\theoremstyle{definition}
\newtheorem{defn}[thm]{Definition}
\theoremstyle{remark}
\newtheorem{rem}[thm]{Remark}
\providecommand{\norm}[1]{\lVert#1\rVert}
\def\re{\mathop{\rm Re}\nolimits}
\def\CC{\mathbb C}
\def\RR{\mathbb R}
\def\exp{\mathop{\rm e}\nolimits}
\def\re{\mathop{\rm Re}\nolimits}
\begin{document}

\begin{frontmatter}

\title{On the admissibility of retarded delay systems}

\author[UoL]{Rados{\l}aw Zawiski\corref{correspondingauthor}}
\ead{R.Zawiski@leeds.ac.uk}
\author[UoL]{Jonathan R. Partington}
\cortext[correspondingauthor]{Corresponding author}
\address[UoL]{School of Mathematics, University of Leeds, LS2 9JT Leeds, UK}

\begin{abstract}
We investigate a Hilbert space dynamical system of the form $\dot{z}(t)=Az(t)+A_1z(t-\tau)+Bu(t)$, where $A$ generates a semigroup of contractions and $A_1$ is a bounded operator, in order to determine whether the operator $B$ is admissible. Our approach is based on the Miyadera--Voigt perturbation theorem and the Weiss conjecture on admissibility of control operators for contraction semigroups. We demonstrate that the retarded delay system can be represented as a well-posed abstract Cauchy problem with a solution formed by an initially log-concave bounded semigroup.
\end{abstract}

\begin{keyword}
admissibility, state delay, retarded dynamical systems, contraction semigroups
\end{keyword}

\end{frontmatter}

\section{Introduction}
In this article we analyse dynamical systems with delay in state variable from the perspective of admissibility of a control operator. The object of our interest is an abstract retarded system
\begin{equation}\label{eqn retarded system introduction}
\left\{\begin{array}{ll}
        \dot{z}(t)=Az(t)+A_{1}z(t-\tau)+Bu(t)\\
        z(0)=z_0,        \\
        \end{array}
\right.
\end{equation}
where, initially, the closed, densely defined operator $A:D(A)\rightarrow X$, $D(A)\subset X$, is a generator of a strongly continuous semigroup $\big(T(t)\big)_{t\geq0}$ such that $T(t)\in\mathcal{L}(X)$ for every $t\geq0$, where $X$ is a Hilbert state space, $B$ is the control operator acting on values of control functions $u\in L^2(J,U)$ with $J$ being a time interval and $U$ a Hilbert space.

A system of the form \eqref{eqn retarded system introduction} without control input $u$ is frequently used as an example of a positive system describing population dynamics and, 
either in an abstract or PDE setting, is well analysed - see \cite[Chapter VI.6]{Engel_Nagel} and references therein.

For a thorough presentation of admissibility results for state-undelayed systems we refer the reader to \cite{Jacob_Partington_2004} and a rich list of references therein. In particular, the results in \cite{Grabowski_Callier_1996} and \cite{Engel_1999} form a basis for considerations in \cite{Batkai_Piazzera} in terms of developing a correct setting, adapted also in this article, in which we conduct the admissibility analysis. The Weiss conjecture, based on which a necessary and sufficient condition for admissibility of hypercontractive semigroups was established in \cite{Jacob_Partington_2001}, was stated in \cite{Weiss_1991}.

In this paper we do not relate \eqref{eqn retarded system introduction} to any particular positive system, but rather treat it as a general starting point for analysis of linear dynamical systems with delay influencing the state vector. We do not put any other assumptions on the undelayed semigroup $\big(T(t)\big)_{t\geq0}$ apart from being a contraction. This allows us to perform analysis in a relatively general case where the necessary and sufficient conditions for admissiblity are known, and yet obtain concrete outcomes. 

The results presented here form a basis for analysis of a specific case of delay system, namely state delay diagonal systems (full details concerning diagonal systems will be presented elswhere \cite{Partington_Zawiski_2018b}).

Section~\ref{sec:2} contains the necessary background results, leading to the main results in Section~\ref{sec:3}.
An example is given in Section~\ref{sec:4}, and some conclusions are given in Section~\ref{sec:5}.

\section{Preliminaries}\label{sec:2}

Apart from definitions introduced in the previous section, throughout this paper the notation $(X,\norm{\cdot}_X)$ and $(U,\norm{\cdot}_U)$ denotes Hilbert spaces with norms coming from appropriate inner products (this is also the case whenever the subscript is omitted). We use the following Sobolev spaces (see \cite{Kreuter_2015} for vector valued functions or \cite[Chapter 5]{Evans} for functionals):
$H^1(J,X)=W^{1,2}(J,X):=\{f\in L^2(J,X):\frac{d}{dt}f(t)\in L^2(J,X)\}$, $H_{c}^1(J,X)=W_{c}^{1,2}(J,X):=\{f\in H^1(J,X):f\vert_{J\setminus S}=0\text{ for every compact }\ S\subset J\}$ and $H_{0}^1(J,X)=W_{0}^{1,2}(J,X):=\{f\in H^1(J,X):\ f(\partial J)=0\}$.
 
\subsection{The state delay equation setting}

To describe a correct setting in which we will consider system \eqref{eqn retarded system introduction}, we follow \cite[Chapter VI.6]{Engel_Nagel} and \cite[Chapter 3.1]{Batkai_Piazzera}. Consider a function  $z:[-\tau,\infty)\rightarrow X$. For each $t\geq0$ we call the function $z_{t}:[-\tau,0]\rightarrow X$, $z_{t}(\sigma):=z(t+\sigma)$, a \textit{history segment} with respect to $t\geq0$. With history segments we consider a function called the \textit{history function} of $z$, that is $h_{z}:[0,\infty)\rightarrow L^2([-\tau,0],X)$, $h_z(t):=z_{t}$. For the whole of the remaining part of this paper we normalize the delay $\tau$ to $\tau=1$. In \cite[Lemma 3.4]{Batkai_Piazzera} we find the following
\begin{prop}\label{prop history function derivative}
Let $z:[-1,\infty)\rightarrow X$ be a function which belongs to \\$H_{loc}^1([-1,\infty],X)$. Then the history function $h_z:t\rightarrow z_t$ of $z$ is continuously differentiable from $\mathbb{R}_+$ into $L^2([-1,0],X)$ with derivative
\begin{equation}\label{eqn history function derivative}
\frac{d}{dt}h_{z}(t)=\frac{d}{d\sigma}z_{t}.
\end{equation}
\end{prop}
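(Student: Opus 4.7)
The plan is to establish the derivative formula \eqref{eqn history function derivative} by computing the difference quotient directly in the $L^2([-1,0],X)$-norm, exploiting the fact that $z \in H^1_{loc}([-1,\infty),X)$ is (Bochner-)absolutely continuous with weak derivative $z' \in L^2_{loc}$, so that the vector-valued fundamental theorem of calculus is available. Continuity of the derivative will then follow from strong continuity of translations on $L^2$.

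First I would fix $t \geq 0$ and $h \neq 0$ with $t+h \geq 0$, and for $\sigma \in [-1,0]$ rewrite
\begin{equation*}
\frac{z(t+h+\sigma) - z(t+\sigma)}{h} - z'(t+\sigma) = \frac{1}{h}\int_0^h \bigl[ z'(t+\sigma+s) - z'(t+\sigma) \bigr]\,ds.
\end{equation*}
Taking $X$-norms, squaring, applying Jensen's inequality on the right-hand side, integrating over $\sigma \in [-1,0]$, and exchanging the order of integration via Fubini leads to an estimate of the form
\begin{equation*}
\left\| \frac{h_z(t+h) - h_z(t)}{h} - \frac{d}{d\sigma}z_t \right\|_{L^2([-1,0],X)}^2 \;\leq\; \frac{1}{|h|}\int_0^{|h|} \bigl\| z'(\cdot + s) - z'(\cdot) \bigr\|_{L^2([t-1,t],X)}^2 \,ds.
\end{equation*}
Strong continuity of translations on $L^2$ restricted to bounded intervals forces the integrand to tend to $0$ as $s \to 0$, so the average on the right vanishes as $h \to 0$; this is precisely differentiability of $h_z$ at $t$ with derivative equal to $\frac{d}{d\sigma}z_t \in L^2([-1,0],X)$.

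For continuity of $t \mapsto \frac{d}{d\sigma}z_t$, the same translation-continuity fact applied to $\| z'(t+h+\cdot) - z'(t+\cdot) \|_{L^2([-1,0],X)}$ shows it tends to $0$ as $h \to 0$. The only genuinely non-bookkeeping ingredient is the Bochner form of the fundamental theorem of calculus for $H^1_{loc}$ vector-valued functions, which identifies the distributional derivative with an a.e.\ Bochner derivative and supplies the integral representation used above; once that is granted, the proof reduces to a direct application of Jensen's inequality, Fubini's theorem, and continuity of translations in $L^2$.
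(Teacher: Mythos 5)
The paper does not prove this proposition at all: it is imported verbatim from the cited reference (B\'atkai--Piazzera, Lemma 3.4), so there is no in-paper argument to compare against. Your proof is correct and is essentially the standard argument for this result: the vector-valued fundamental theorem of calculus for $H^1_{loc}$ functions turns the difference quotient minus the candidate derivative into an average of translates of $z'$, and Jensen, Tonelli and strong continuity of translations in $L^2$ on bounded intervals finish both the differentiability and the continuity of $t\mapsto \frac{d}{d\sigma}z_t$. The only points worth making explicit in a written version are the one-sided quotient at $t=0$ and the identification of $\frac{d}{d\sigma}z_t$ with the restriction of $z'$ to $[t-1,t]$, both of which are routine.
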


Define the Cartesian product $\mathcal{X}:=X\times L^2([-1,0],X)$ with an inner product 
\begin{equation}\label{eqn defn of inner product on Cartesian product}
 \bigg\langle\binom{x}{f},\binom{y}{g}\bigg\rangle_{\mathcal{X}}:=\langle x,y\rangle_{X}+\langle f,g\rangle_{L^2}.
\end{equation}
Then $\mathcal{X}$ becomes a Hilbert space $(\mathcal{X},\|\cdot\|_{\mathcal{X}})$ with the norm $\|\binom{x}{f}\|_{\mathcal{X}}^2=\|x\|_{X}^{2}+\|f\|_{L^2}^{2}$. Consider a linear, autonomous delay differential equation of the form
\begin{equation}\label{eqn delay autonomous diff eq}
\left\{\begin{array}{ll}
        \dot{z}(t)=Az(t)+\Psi z_t\\
        z(0)=x,        \\
        z_0=f,		\\
       \end{array}
\right.
\end{equation}
\color{black}
where $\Psi\in\mathcal{L}(H^1([-1,0],X),X)$ is a \textit{delay operator}, the pair $x\in D(A)$ and $f\in L^2([-1,0],X)$ forms an initial condition. By Proposition~\ref{prop history function derivative} equation \eqref{eqn delay autonomous diff eq} may be written as an abstract Cauchy problem
\begin{equation}\label{eqn defn abstract Cauchy problem}
\left\{\begin{array}{ll}
        \dot{v}(t)=\mathcal{A}v(t)\\
        v(0)=\binom{x}{f},        \\
       \end{array}
\right.
\end{equation} 
where $v:t\rightarrow\binom{z(t)}{z_t}\in\mathcal{X}$ 
and $\mathcal{A}$ is an operator on $\mathcal{X}$ defined as
\begin{equation}\label{eqn defn abstract A}
\mathcal{A}:=\left(\begin{array}{cc} 
								A & \Psi \\ 
								0 & \frac{d}{d\sigma}	
						\end{array}\right),
\end{equation}
with domain
\begin{equation}\label{eqn defn abstract A domain}
D(\mathcal{A}):=\bigg\{\binom{x}{f}\in D(A)\times H^1([-1,0],X):\ f(0)=x\bigg\}.
\end{equation}
The operator $(\mathcal{A},D(\mathcal{A}))$ is closed and densely defined on $\mathcal{X}$ \cite[Lemma 3.6]{Batkai_Piazzera}. Let $\mathcal{A}=\mathcal{A}_0+\mathcal{A}_{\Psi}$, where 
\begin{equation}\label{eqn defn A_0}
\mathcal{A}_0:=\left(\begin{array}{cc} 
								A & 0 \\ 
								0 & \frac{d}{d\sigma}	
						\end{array}\right),\qquad D(\mathcal{A}_0)=D(\mathcal{A}),
\end{equation}
and
\begin{equation}\label{eqn defn  A_psi}
\mathcal{A}_{\Psi}:=\left(\begin{array}{cc} 
								0 & \Psi \\ 
								0 & 0	
						\end{array}\right)\in\mathcal{L}\big(X\times H^1([-1,0],X),\mathcal{X}\big).
\end{equation}
We recall the following Proposition from \cite[Theorem 3.25]{Batkai_Piazzera}, as we will later need the form of a semigroup generated by $(\mathcal{A}_0,D(\mathcal{A}))$.
\begin{prop}\label{prop abstract A_0 as generator}
The following are equivalent:
	\begin{itemize}
		\item[(i)] The operator $(A,D(A))$ generates a strongly continuous semigroup\\ $\big(T(t)\big)_{t\geq0}$ on $X$.
		\item[(ii)] The operator $(\mathcal{A}_0,D(\mathcal{A}_0))$ generates a strongly continuous semigroup\\ $\big(\mathcal{T}_0(t)\big)_{t\geq0}$ on $X\times L^p([-1,0],X)$ for all $1\leq p<\infty$.
	\end{itemize}
	The semigroup $\big(\mathcal{T}_0(t)\big)_{t\geq0}$ is given by 
	\begin{equation}\label{eqn defn T_0 semigroup}
		\mathcal{T}_{0}(t):=\left(\begin{array}{cc} 
									T(t) & 0 \\ 
									S_{t} & S_{0}(t)	
									\end{array}\right)\qquad \forall t\geq0,
	\end{equation}
	where $\big(S_{0}(t)\big)_{t\geq0}$ is the nilpotent left shift semigroup on $L^p([-1,0],X)$, 
	\begin{equation}
	S_{0}(t)f(\tau):=\left\{\begin{array}{ll}
        								f(\tau+t)	&	\hbox{if} \ \tau+t\in[-1,0],\\
        								0		&	\hbox{otherwise}\\
       									\end{array}\right.
	\end{equation}
	and $S_{t}:X\rightarrow L^p([-1,0],X)$, 
	\begin{equation}
	(S_{t}x)(\tau):=\left\{\begin{array}{ll}
        								T(\tau+t)x	&	\hbox{if}\ -t<\tau\leq0,\\
        								0					&	\hbox{if} -1\leq\tau\leq-t.\\
       									\end{array}\right.
	\end{equation}
\end{prop}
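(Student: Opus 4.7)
My plan is to treat the two directions separately, with the bulk of the work devoted to $(i) \Rightarrow (ii)$, and the converse extracted from the resolvent equation.

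For $(i) \Rightarrow (ii)$ I would define $\big(\mathcal{T}_0(t)\big)_{t\geq 0}$ by the matrix formula in the statement, then verify the four standard items in turn. First, $\mathcal{T}_0(0) = I$ is immediate from the convention $T(0) = I$, $S_0 = 0$, $S_0(0) = I$. Second, the semigroup law $\mathcal{T}_0(t+s) = \mathcal{T}_0(t)\mathcal{T}_0(s)$ reduces by block matrix multiplication to three identities: $T(t+s) = T(t)T(s)$ (given), $S_0(t+s) = S_0(t)S_0(s)$ (standard for the nilpotent left shift), and the cross-term $S_{t+s} = S_t T(s) + S_0(t) S_s$, which I would check pointwise in $\tau \in [-1,0]$ by splitting into the three ranges $\tau \in (-t,0]$, $\tau \in (-s-t,-t]$, and $\tau \leq -s-t$ and observing that in each range exactly one summand on the right matches $(S_{t+s}x)(\tau)$. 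Third, for strong continuity at $0$, the top coordinate is $C_0$ by hypothesis on $T$; for the bottom coordinate, $S_0(t)f \to f$ in $L^p$ is the well-known strong continuity of the nilpotent left shift, while $\|S_t x\|_{L^p}^p = \int_{-t}^{0}\|T(\tau+t)x\|^p\,d\tau \to 0$ by local boundedness of $T$ on $[0,1]$.

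The fourth and hardest item is identifying the generator with $\mathcal{A}_0$. Fix $\binom{x}{f} \in D(\mathcal{A}_0)$, so $x \in D(A)$, $f \in H^1([-1,0],X)$ and $f(0)=x$. The top coordinate gives $\frac{1}{t}(T(t)x - x) \to Ax$ by definition. The bottom coordinate is $\frac{1}{t}\big(S_t x + (S_0(t) - I)f\big)$, which I would split according to whether $\tau + t \leq 0$ or $\tau + t > 0$. On $\tau \in [-1,-t]$ only the shift contributes and $\frac{f(\tau+t) - f(\tau)}{t} \to f'(\tau)$ in $L^p$, which is the standard difference-quotient convergence in $W^{1,p}$. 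On the short range $\tau \in (-t,0]$, I would use the boundary condition $f(0)=x$ to write
\begin{equation*}
T(\tau+t)x - f(\tau) = \bigl(T(\tau+t) - I\bigr)x - \bigl(f(\tau) - f(0)\bigr),
\end{equation*}
and then bound each summand by $\int_0^{\tau+t} T(r)Ax\,dr$ and $\int_{\tau}^0 f'(\sigma)\,d\sigma$ respectively; H\"older's inequality together with the length of the interval being $t$ gives that this piece, divided by $t$, has vanishing $L^p$-contribution as $t \to 0^+$. This cancellation on $(-t,0]$ is precisely where the compatibility condition $f(0)=x$ embedded in $D(\mathcal{A})$ becomes essential and will be the main obstacle of the argument. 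Together the two ranges show that the generator of $\mathcal{T}_0$ extends $\mathcal{A}_0$; since $\mathcal{A}_0$ is closed and densely defined (Lemma~3.6 of \cite{Batkai_Piazzera} as recalled above) and admits at least one point in its resolvent via the computation below, the two operators coincide.

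For $(ii) \Rightarrow (i)$ I would solve the resolvent equation $(\lambda - \mathcal{A}_0)\binom{x}{f} = \binom{y}{g}$ at large real $\lambda$. The second row $\lambda f - f' = g$ together with $f(0) = x$ determines $f$ explicitly in terms of $x$ and $g$, while the first row forces $(\lambda - A)x = y$. Taking $g = 0$ exhibits $\lambda - A$ as surjective; injectivity follows by observing that any $x \in \ker(\lambda - A)$ lifts to $\binom{x}{e^{\lambda \cdot}x} \in \ker(\lambda - \mathcal{A}_0)$. The Hille--Yosida bounds on $(\lambda - \mathcal{A}_0)^{-n}$ then transfer to $(\lambda - A)^{-n}$ through the inequality $\|x\|_X \leq \|\binom{x}{f}\|_{\mathcal{X}}$ applied to the resolvent of $\binom{y}{0}$ (whose $\mathcal{X}$-norm equals $\|y\|_X$), yielding the Hille--Yosida condition for $A$ and hence that $A$ generates a $C_0$-semigroup on $X$.
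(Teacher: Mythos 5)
The paper does not prove this proposition at all: it is recalled verbatim from \cite{Batkai_Piazzera} (Theorem 3.25), so there is no in-paper argument to compare against; what you have written is essentially the standard direct verification, and it is correct. The two genuinely delicate points are exactly the ones you isolate. First, the cross-term identity $S_{t+s}=S_tT(s)+S_0(t)S_s$, which your three-range case split $\tau\in(-t,0]$, $\tau\in(-s-t,-t]$, $\tau\leq -s-t$ settles correctly. Second, the generator identification on the short range $\tau\in(-t,0]$, where the compatibility condition $f(0)=x$ built into $D(\mathcal{A}_0)$ is what makes the $L^p$-contribution of $\tfrac{1}{t}\big(T(\tau+t)x-f(\tau)\big)$ vanish; your H\"older estimate there is sound, the piece being of order $t^{1/p}\norm{Ax}+\norm{f'}_{L^p([-t,0],X)}\to0$. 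Two small points would deserve an explicit line in a written-out version: (a) the passage from ``the generator $G$ of $\mathcal{T}_0$ extends $\mathcal{A}_0$'' to equality needs only that $\lambda-\mathcal{A}_0$ is surjective for some $\lambda\in\rho(G)$, which your resolvent computation supplies for large real $\lambda$ because $\rho(A)$ contains a right half-plane under (i) --- so there is no circularity in invoking ``the computation below''; and (b) in the converse direction, transferring the Hille--Yosida bounds from $(\lambda-\mathcal{A}_0)^{-n}$ to $(\lambda-A)^{-n}$ uses that the resolvent of $\mathcal{A}_0$ is lower triangular with $(1,1)$ entry $(\lambda-A)^{-1}$, so that the first component of $(\lambda-\mathcal{A}_0)^{-n}\binom{y}{0}$ is exactly $(\lambda-A)^{-n}y$; this is true but implicit in your write-up.
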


In order to make use of the Miyadera--Voigt Perturbation Theorem, we need the following

\begin{defn}\label{defn Sobolev tower}
Let $\beta\in\rho(A)$ and denote $(X_1,\norm{\cdot}_1):=(D(A),\norm{\cdot}_1)$ with $\norm{\cdot}_1:=\norm{(\beta I-A)x}\ (x\in D(A))$ .
 
Similarly, we set $\norm{x}_{-1}:=\norm{(\beta I-A)^{-1}x}\ (x\in X)$. Then the space $(X_{-1},\norm{\cdot}_{-1})$ denotes the completion of $X$ under the norm $\norm{\cdot}_{-1}$. For $t\geq0$ we define $T_{-1}(t)$ as the continuous extension of $T(t)$ to the space  $(X_{-1},\norm{\cdot}_{-1})$. 
\end{defn}
In the sequel, much of our reasoning is justified by the following proposition, to which we do not refer directly but 
include here for the reader's convenience. 
\begin{prop}\label{prop Hilbert rigged space}
With notation of Definition \ref{defn Sobolev tower} we have the following
\begin{itemize}
 \item[(i)] The spaces $(X_1,\norm{\cdot}_1)$ and $(X_{-1},\norm{\cdot}_{-1})$ are independent of the choice of $\beta\in\rho(A)$.
 \item[(ii)] $(T_1(t))_{t\geq0}$ is a strongly continuous semigroup on the Banach space \\$(X_1,\norm{\cdot}_1)$ and we have $\norm{T_1(t)}_1=\norm{T(t)}$ for all $t\geq0$.
 \item[(iii)] $(T_{-1}(t))_{t\geq0}$ is a strongly continuous semigroup on the Banach space \\$(X_{-1},\norm{\cdot}_{-1})$ and we have $\norm{T_{-1}(t)}_{-1}=\norm{T(t)}$ for all $t\geq0$.
\end{itemize}
\end{prop}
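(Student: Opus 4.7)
The plan is to exploit the resolvent identity and the fact that $T(t)$ commutes with $A$ on $D(A)$, and hence with $R(\beta,A)$ on all of $X$, throughout all three parts.

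For (i), given $\beta,\beta'\in\rho(A)$, I would start from the identity $(\beta'I-A)=[(\beta'-\beta)R(\beta,A)+I](\beta I-A)$ on $D(A)$ to obtain the two-sided estimate $\norm{(\beta'I-A)x}\leq(1+\abs{\beta'-\beta}\norm{R(\beta,A)})\norm{(\beta I-A)x}$; by symmetry the two graph norms on $D(A)$ are equivalent, so they define the same Banach space $X_1$ up to equivalent norm. The same idea applied at the level of resolvents, $R(\beta',A)=[I+(\beta-\beta')R(\beta',A)]R(\beta,A)$ on $X$, yields equivalence of the two candidate norms $\norm{\cdot}_{-1}$ on $X$, so their completions are isomorphic as Banach spaces.

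For (ii), I would observe that $T(t)$ leaves $D(A)$ invariant and commutes with $A$ on $D(A)$, so the restriction $T_1(t):=T(t)\vert_{D(A)}$ is a well-defined map $X_1\to X_1$. The computation $\norm{T_1(t)x}_1=\norm{(\beta I-A)T(t)x}=\norm{T(t)(\beta I-A)x}$ combined with the surjectivity of $\beta I-A:D(A)\to X$ immediately gives the norm identity $\norm{T_1(t)}_1=\norm{T(t)}$. Strong continuity and the semigroup law on $X_1$ then reduce to the corresponding properties of $T(t)$ applied to $(\beta I-A)x\in X$. For (iii), the inequality $\norm{T(t)x}_{-1}=\norm{R(\beta,A)T(t)x}=\norm{T(t)R(\beta,A)x}\leq\norm{T(t)}\norm{x}_{-1}$ for $x\in X$ allows me to extend $T(t)$ by density to a bounded operator $T_{-1}(t)$ on $X_{-1}$ with $\norm{T_{-1}(t)}_{-1}\leq\norm{T(t)}$. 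For the reverse inequality, note that $R(\beta,A):X\to X$ extends by construction to an isometric isomorphism $\hat R:X_{-1}\to X$; passing to the limit in $R(\beta,A)T(t)=T(t)R(\beta,A)$ yields the intertwining $\hat R\,T_{-1}(t)=T(t)\hat R$, from which $\norm{T_{-1}(t)}_{-1}=\norm{T(t)}$ follows. Strong continuity of $T_{-1}$ is then obtained by combining strong continuity of $T$ on $X$, density of $X$ in $X_{-1}$, and the uniform boundedness of $\norm{T_{-1}(t)}_{-1}$ on compact time intervals supplied by the norm identity.

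The main obstacle is not any single estimate but the careful bookkeeping: verifying that the extension $T_{-1}(t)$ is independent of the choice of $\beta$ (which uses (i) plus a density argument), that the semigroup identity $T_{-1}(t+s)=T_{-1}(t)T_{-1}(s)$ propagates through the extension, and that $\hat R$ is indeed the promised isometric isomorphism rather than merely an isometric embedding. All three points rely on the same underlying observation, namely that $R(\beta,A)$ commutes with $T(t)$, which is what makes the construction genuinely functorial and allows the norm identities in (ii) and (iii) to hold on the nose rather than merely up to constants.
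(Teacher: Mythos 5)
Your argument is correct, and it is essentially the standard proof of this result: the paper itself gives no proof, deferring to \cite[Chapter II.5]{Engel_Nagel} and \cite[Chapter 2.10]{Tucsnak_Weiss}, where exactly your route is taken --- the resolvent identity for (i), the intertwining relation $(\beta I-A)T(t)x=T(t)(\beta I-A)x$ on $D(A)$ together with the bijectivity of $\beta I-A:D(A)\to X$ for (ii), and the density extension of the isometry $x\mapsto R(\beta,A)x$ (surjective onto $X$ because its range $D(A)$ is dense) for (iii). The only bookkeeping points you flag are indeed the ones that need care, and they all resolve as you say via the commutation of $R(\beta,A)$ with $T(t)$ and uniqueness of continuous extensions from a dense subspace.
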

See \cite[Chapter II.5]{Engel_Nagel} or \cite[Chapter 2.10]{Tucsnak_Weiss} for more details on these elements. A sufficient condition for $P\in\mathcal{L}(X_1,X)$ to be a perturbation of Miyadera-Voigt class, 
hence implying that $A+P$ is a generator on $X$, takes the form of  \cite[Corollaries III.3.15 and 3.16]{Engel_Nagel} 

\begin{prop}\label{prop Miyadera-Voigt sufficient condition of well posedness}
Let $(A,D(A))$ be the generator of a strongly continuous semigroup $\big(T(t)\big)_{t\geq0}$ on a Banach space $X$ and let ${P\in\mathcal{L}(X_1,X)}$ be a perturbation which satisfies
\begin{equation}\label{eqn condition on perturbation for well-posedness}
\int_{0}^{t_0}\norm{PT(r)x}dr\leq q\Vert x\Vert\qquad\forall x\in D(A)
\end{equation}
for some $0\leq q<1$. Then
 the sum $A+P$ with domain $D(A+P):=D(A)$ generates a strongly continuous semigroup $(S(t))_{t\geq0}$ on $X$. Moreover, for all $t\geq0$ the semigroup $(S(t))_{t\geq0}$ satisfies
\begin{equation}\label{eqn perturbed semigroup in integral form}
 S(t)x=T(t)x+\int_{0}^{t}S(s)PT(t-s)xds\qquad\forall x\in D(A).
\end{equation}
\end{prop}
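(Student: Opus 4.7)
The plan is to construct the candidate semigroup $(S(t))_{t\ge 0}$ directly as the fixed point of the integral equation \eqref{eqn perturbed semigroup in integral form}, and then to verify that it is strongly continuous and that its generator is $A+P$ with domain $D(A)$.

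First I would work on a bounded interval $[0,t_0]$ and set up a Banach fixed-point problem in the space $\mathcal{F}:=\{W:[0,t_0]\to\mathcal{L}(X):W\text{ is strongly continuous}\}$ equipped with $\|W\|_{\infty}:=\sup_{0\le t\le t_0}\|W(t)\|$. Define $\Phi:\mathcal{F}\to\mathcal{F}$ by
$$(\Phi W)(t)x:=T(t)x+\int_{0}^{t}W(t-s)PT(s)x\,ds,\qquad x\in D(A),$$
and extend $(\Phi W)(t)$ to all of $X$ using the fact that \eqref{eqn condition on perturbation for well-posedness} says $x\mapsto PT(\cdot)x$ is bounded from $D(A)\subset X$ into $L^{1}([0,t_0],X)$ with norm at most $q$, so it extends continuously to $X$ by density. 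The contraction estimate
$$\|(\Phi W_{1}-\Phi W_{2})(t)x\|\le\|W_{1}-W_{2}\|_{\infty}\int_{0}^{t}\|PT(s)x\|\,ds\le q\,\|W_{1}-W_{2}\|_{\infty}\|x\|$$
then yields a unique fixed point $S\in\mathcal{F}$ satisfying \eqref{eqn perturbed semigroup in integral form}.

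Next I would extend $S$ to $[0,\infty)$ either by patching successive intervals $[kt_0,(k+1)t_0]$ or by redoing the fixed-point argument on $[0,\infty)$ with an exponentially weighted norm $\sup_{t\ge 0}e^{-\omega t}\|W(t)\|$ for $\omega$ large enough that the Miyadera--Voigt bound still forces contraction. Strong continuity at $0^{+}$ is immediate from the integral equation, the convolution term being $O(t\|x\|)$. The semigroup identity $S(t+r)=S(t)S(r)$ would be verified by showing that both $r\mapsto S(t+r)x$ and $r\mapsto S(t)S(r)x$ satisfy the same integral equation with the same initial value, and then invoking uniqueness of the fixed point.

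Finally, to identify the generator, for $x\in D(A)$ I would write
$$\frac{S(t)x-x}{t}=\frac{T(t)x-x}{t}+\frac{1}{t}\int_{0}^{t}S(s)PT(t-s)x\,ds,$$
whose first term tends to $Ax$ and whose second term tends to $Px$ by strong continuity of $S$ and $T$, so $D(A)$ sits in the domain of the generator and the generator coincides there with $A+P$. To exclude additional domain elements, one shows that for $\lambda$ sufficiently large the operator $\lambda-(A+P)$ maps $D(A)$ bijectively onto $X$ via a Neumann-series resolvent construction based on \eqref{eqn condition on perturbation for well-posedness}. The main obstacle, in my view, is the careful bookkeeping arising from the fact that $P$ is defined only on $X_{1}$: every expression $PT(s)x$ in the integrals requires $T(s)x\in D(A)$, and one must exploit the $L^{1}$-bound of \eqref{eqn condition on perturbation for well-posedness} to extend the construction by density to $x\in X$ in a way that preserves both the convolution formula and the semigroup structure, while the exact identification $D(A+P)=D(A)$ (rather than some strictly larger domain) is a delicate point that ultimately rests on the resolvent argument rather than the differentiation at $t=0$.
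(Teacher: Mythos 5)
This proposition is not proved in the paper at all: it is the classical Miyadera--Voigt perturbation theorem, quoted verbatim from Engel and Nagel (Corollaries III.3.15 and III.3.16), so there is no in-paper argument to compare against. Your sketch is essentially the standard textbook proof of that result: build $S$ as the fixed point of $W\mapsto T+VW$ with $(VW)(t)x=\int_0^t W(s)PT(t-s)x\,ds$ on $C([0,t_0],\mathcal{L}_s(X))$ (equivalently, as the Neumann series $\sum_{n\geq0}V^nT$, convergent because \eqref{eqn condition on perturbation for well-posedness} forces $\norm{V}\leq q<1$), identify the generator on $D(A)$ by differentiating at $t=0^+$, and pin down $D(A+P)=D(A)$ by showing $\lambda-(A+P)=(I-PR(\lambda,A))(\lambda-A)$ is bijective for large $\re\lambda$ via a Neumann series. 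The one step you dispatch too quickly is the semigroup law: for fixed $t$, the functions $r\mapsto S(t+r)$ and $r\mapsto S(t)S(r)$ do \emph{not} satisfy the same Volterra equation of the form you set up, because $P$ does not commute with the semigroup operators, so ``uniqueness of the fixed point'' does not apply as stated. The standard repairs are either the term-by-term convolution identity $(V^nT)(t+s)=\sum_{k=0}^{n}(V^kT)(t)\,(V^{n-k}T)(s)$ for the Neumann series, or a uniqueness argument run on the other variation-of-constants formula $S(t)x=T(t)x+\int_0^tT(t-s)PS(s)x\,ds$, which first requires knowing that $S(s)$ leaves $D(A)$ invariant. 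A minor further imprecision: the convolution term is not $O(t\norm{x})$, since \eqref{eqn condition on perturbation for well-posedness} controls only the $L^1$ mean of $\norm{PT(\cdot)x}$ and not its pointwise values; it does tend to $0$ as $t\to0^+$ by absolute continuity of the integral, which is all that strong continuity needs.
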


\subsection{The admissibility problem}

The basic object in the formulation of the admissibility problem is a linear system and its mild solution
\begin{equation}\label{eqn basic object}
 \frac{d}{dt}x(t)=Ax(t)+Bu(t);\quad x(t)=T(t)x_0+\int_{0}^{t}T(t-s)Bu(s)ds,
\end{equation}
where $x:[0,\infty)\rightarrow X$, $u\in V$ where $V$ is a space of measurable functions from $[0,\infty)$ to $U$ and $B$ is a \textit{control operator}; $x_0\in X$ is an initial state.

In many practical examples the control operator $B$ is unbounded. In such cases \eqref{eqn basic object} is viewed on an extrapolation space $X_{-1}\supset X$, where $B\in\mathcal{L}(U,X_{-1})$. To ensure that the state $x(t)$ lies in $X$ it is sufficient that $\int_{0}^{t}T_{-1}(t-s)Bu(s)ds\in X$ for all inputs $u\in V$. Put differently, we have the following

\begin{defn}
 The control operator $B\in\mathcal{L}(U,X_{-1})$ is said to be \textit{finite-time admissible} for a semigroup $\big(T(t)\big)_{t\geq0}$ on a Hilbert space $X$ if for each $\tau>0$ there is a constant $c(\tau)$ such that the condition 
\begin{equation}\label{eqn defn finite-time admissibility by norm inequality}
 \Big\|\int_{0}^{\tau}T(\tau-s)Bu(s)ds\Big\|_X\leq c(\tau)\norm{u}_V
\end{equation}
holds for all inputs $u$, and an \textit{infinite-time admissible} if the condition \eqref{eqn defn finite-time admissibility by norm inequality} holds for all $\tau>0$ with $c(\tau)$ uniformly bounded.
\end{defn}

For contraction semigroups the following proposition was shown in \cite{Jacob_Partington_2001}:

\begin{prop}\label{prop admissibility condition}
Let $\big(T(t)\big)_{t\geq0}$ be a $C_0$-semigroup of contractions on a separable Hilbert space $X$ with infinitesimal generator $A$ and let $B\in\mathcal{L}(U,X_{-1})$, where $\dim U < \infty$. Then $B$ is infinite-time admissible if and only if there exists a constant $C>0$ such that the following resolvent condition holds
\begin{equation}\label{eqn admissibility condition for contraction semigroups}
\|(\lambda I-A)^{-1}B\| \le \frac{C}{\sqrt{\re \lambda}} \qquad \forall\lambda \in \CC_+.
\end{equation}
\end{prop}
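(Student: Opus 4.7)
The two implications have very different character: necessity of the resolvent bound is a direct Laplace-transform calculation, while sufficiency is the substantive content and is precisely the Weiss conjecture for contraction semigroups on a Hilbert space.

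\emph{Necessity.} Assume that $B$ is infinite-time admissible with uniform constant $M>0$. Fix $\lambda\in\CC_+$ and $u_0\in U$, and test the admissibility inequality against the exponential input $u(s):=e^{-\lambda(\tau-s)}u_0$ on $[0,\tau]$. The change of variable $r=\tau-s$ gives
$\int_0^\tau T_{-1}(\tau-s)Bu(s)\,ds = \int_0^\tau e^{-\lambda r}T_{-1}(r)Bu_0\,dr,$
which converges as $\tau\to\infty$ to $(\lambda I-A)^{-1}Bu_0\in X$ by the standard Laplace representation of the resolvent extended to $X_{-1}$. Since $\|u\|_{L^2}^2=\|u_0\|_U^2(1-e^{-2\tau\re\lambda})/(2\re\lambda)\leq\|u_0\|_U^2/(2\re\lambda)$, the admissibility estimate yields $\|(\lambda I-A)^{-1}Bu_0\|_X\leq (M/\sqrt{2\re\lambda})\,\|u_0\|_U$, i.e.\ the required resolvent condition with $C=M/\sqrt{2}$.

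\emph{Sufficiency.} By linearity and the finite-dimensionality of $U$, the admissibility of $B$ and the resolvent condition are each equivalent to the corresponding property for each column vector $b_j\in X_{-1}$ obtained by evaluating $B$ on a basis of $U$, so one reduces to the scalar case $U=\CC$ with $B$ a single element $b\in X_{-1}$. I would then dualise, replacing admissibility of $b$ by the observation admissibility of $b^*$ for the dual contraction semigroup, and invoke the Sz.-Nagy dilation theorem: the contraction semigroup lifts to a unitary group on a larger Hilbert space, which realises $X$ as a model subspace of a vector-valued $H^2$-space on $\CC_+$. In this functional model the resolvent bound $\|(\lambda I-A)^{-1}b\|\leq C/\sqrt{\re\lambda}$ translates into a Carleson-type estimate for a positive measure $d\mu_b$ canonically attached to $b$, and the Carleson embedding theorem for $H^2(\CC_+)$ then converts this measure-theoretic bound back into the desired $L^2\to X$ continuity of the admissibility map.

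The main obstacle is the sufficiency direction. The Weiss conjecture is known to fail for general $C_0$-semigroups on Hilbert space, so the argument must genuinely exploit both contractivity, which is what unlocks the Sz.-Nagy functional model and the associated $H^2$ representation, and $\dim U<\infty$, which is what allows the reduction to the scalar Carleson criterion without operator-valued subtleties. The technically delicate step is establishing the precise equivalence between the resolvent estimate on $b$ and the Carleson property of $\mu_b$; the remainder is bookkeeping to pass from dual observation admissibility back to control admissibility of the original $B$.
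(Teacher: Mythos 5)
The paper does not prove this proposition at all: it is quoted verbatim as a known result from \cite{Jacob_Partington_2001}, so there is no in-paper argument to compare against, and the comparison has to be with that reference. Your necessity direction is complete and correct, and it is exactly the standard argument: testing admissibility against the exponential input $e^{-\lambda(\tau-s)}u_0$, passing to the limit $\tau\to\infty$ (where one should note that the partial integrals converge in $X_{-1}$ to $(\lambda I-A_{-1})^{-1}Bu_0$ while remaining uniformly bounded in $X$, so the limit lies in $X$ by weak compactness), and computing $\|u\|_{L^2}$ to extract the factor $1/\sqrt{2\re\lambda}$.

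For sufficiency your plan names the right machinery --- reduction to rank one via $\dim U<\infty$, dualisation to observation admissibility, the Sz.-Nagy--Foias model, and the Carleson embedding theorem --- and this is indeed the route taken in \cite{Jacob_Partington_2001}. But as written it is a programme, not a proof, and the step you explicitly defer (``the precise equivalence between the resolvent estimate on $b$ and the Carleson property of $\mu_b$'') is the entire substance of the theorem; this is where the known counterexamples to the general Weiss conjecture are excluded, so it cannot be treated as bookkeeping. Two further points are missing from the sketch: first, the argument in the reference splits the contraction semigroup into its unitary and completely non-unitary parts, handling the unitary part by the spectral theorem (where the resolvent condition literally becomes a Carleson measure condition for a scalar spectral measure) and only the c.n.u.\ part by the $H^2$ functional model; second, in the model-space part one needs the reproducing kernel thesis for the relevant embedding, i.e.\ that testing the observation map on the normalised reproducing kernels (which is what the resolvent condition amounts to) already implies boundedness on the whole model space. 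Without these two ingredients the sufficiency direction has a genuine gap.
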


\begin{rem}\label{rem infinite_finite time admissibility}
Condition \eqref{eqn admissibility condition for contraction semigroups}, which is usually easier to check than admissibility itself has as a consequence the following observation that if the semigroup satisfies $\|T(t)\| \le e^{\omega t}$, so that $A-\omega I$ generates a contraction semigroup, then finite-time admissibility for the pair $(A,B)$ follows from the resolvent condition
\begin{equation}\label{eqn admissibility condition for exponentially bounded semigroups}
\|( \lambda I-A)^{-1}B\| \le \frac{C}{\sqrt{\re \lambda-\omega}} \qquad \forall \re \lambda > \omega.
\end{equation}
\end{rem}

The next result is a useful tool \cite{Kreuter_2015} in many norm estimations:

\begin{thm}[Sobolev Embedding Theorem]\label{thm Sobolev Embedding Theorem}
Let $X$ be a Banach space and $1\leq p\leq\infty$, then there exists a constant $C$ such that
\[
 \norm{f}_{L^{\infty}(J,X)}\leq C\norm{f}_{W^{1,p}(J,X)}
\]
for all $f\in W^{1,p}(J,X)$, i.e. the embedding $W^{1,p}(J,X)\hookrightarrow L^\infty(J,X)$ is continuous. Further, the inclusion $W^{1,p}(J,X)\subset C_b(J,X)$ holds, where $C_b(J,X)$ is the space of all continuous and bounded functions from $J$ to $X$ with the supremum norm.
 
\end{thm}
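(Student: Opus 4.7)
The plan is to produce a continuous representative for each $f \in W^{1,p}(J,X)$ via the vector-valued fundamental theorem of calculus, and then extract the sup-norm bound by a H\"older-plus-averaging estimate; continuity and boundedness drop out for free.

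First I would invoke the absolute-continuity characterisation of Sobolev functions: for any $f \in W^{1,p}(J,X)$ there exists a continuous $\tilde f : J \to X$ equal to $f$ almost everywhere and satisfying
\begin{equation*}
\tilde f(t) - \tilde f(s) = \int_{s}^{t} f'(r)\,dr,\qquad s,t \in J,
\end{equation*}
with the integral understood in the Bochner sense. This is really the only non-routine ingredient: in a general Banach space it relies on Bochner measurability and Pettis-type lemmas rather than on the Lebesgue differentiation theorem available in the scalar case.

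Granted that step, fix $t \in J$ and an auxiliary subinterval $J_0 \subset J$ of length $\ell \le 1$ (taking $J_0 = J$ if $|J| < 1$). For any $s \in J_0$, H\"older's inequality applied to the integral representation gives
\begin{equation*}
\norm{\tilde f(t)}_X \le \norm{\tilde f(s)}_X + |t-s|^{1/p'}\,\norm{f'}_{L^p(J,X)}.
\end{equation*}
Averaging in $s$ over $J_0$ and applying H\"older a second time to bound $\ell^{-1}\int_{J_0}\norm{\tilde f(s)}_X\,ds$ by $\ell^{-1/p}\norm{f}_{L^p(J,X)}$, one obtains an estimate of the form $\norm{\tilde f(t)}_X \le C(\ell,p)\,\norm{f}_{W^{1,p}(J,X)}$ with the right-hand side independent of $t$; taking the supremum over $t$ yields the claimed embedding. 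The endpoints are handled by the usual conventions in H\"older's inequality, and the case $p=\infty$ is immediate since $\norm{f}_{L^\infty}\le\norm{f}_{W^{1,\infty}}$ by definition of the norm.

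For the second assertion, continuity of $\tilde f$ follows at once from absolute continuity of the Bochner integral $t\mapsto\int_{s}^{t}f'(r)\,dr$, which is a standard dominated-convergence argument; combined with the sup-norm bound this places $\tilde f$ in $C_b(J,X)$. The main obstacle is thus the first step -- the vector-valued fundamental theorem of calculus -- after which the remainder of the proof is a short exercise in H\"older's inequality.
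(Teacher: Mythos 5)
The paper does not actually prove this theorem: it is imported as a known tool with a pointer to \cite{Kreuter_2015}, so there is no in-house argument to compare yours against. Your proposal is the standard (and correct) proof of the one-dimensional vector-valued Sobolev embedding, and it is essentially the argument given in that reference. The one genuinely non-routine ingredient is the one you isolate, namely the existence of a continuous representative with $\tilde f(t)-\tilde f(s)=\int_s^t f'(r)\,dr$; this does hold for an arbitrary Banach space $X$, since one sets $g(t):=\int_{t_0}^t f'(r)\,dr$, verifies $g'=f'$ in the weak sense via Fubini for Bochner integrals, and concludes that $f-g$ is a.e.\ constant by applying functionals $x^*\in X^*$ and the scalar du Bois-Reymond lemma --- no Radon--Nikodym-type hypothesis is needed because you integrate the derivative rather than differentiate an absolutely continuous function. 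Two small points to tighten: the auxiliary interval $J_0$ must be chosen within unit distance of the fixed $t$ (say $t\in\overline{J_0}$ and $|J_0|=\min(1,|J|)$), otherwise the factor $|t-s|^{1-1/p}$ is not bounded by $1$ when $J$ is unbounded; with that choice the constant depends only on $p$ and $\min(1,|J|)$, uniformly in $t$, as required. And for $p=1$ the H\"older step degenerates to $\bigl\lVert\int_s^t f'\bigr\rVert\le\norm{f'}_{L^1}$, which is harmless. With these adjustments the argument is complete and also yields the inclusion $W^{1,p}(J,X)\subset C_b(J,X)$.
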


\section{Retarded non-autonomous dynamical systems}\label{sec:3}
We begin with an analysis of   retarded non-autonomous dynamical systems of the form 
\begin{equation}\label{eqn retarded non-autonomous system}
\left\{\begin{array}{ll}
        \dot{z}(t)=Az(t)+\Psi z_t+Bu(t)\\
        z(0)=x,        \\
        z_0=f,		\\
       \end{array}
\right.
\end{equation}
where all the elements are as in \eqref{eqn delay autonomous diff eq}, $u\in L^2(0,\infty;U)$, $B$ is a control operator and the delay operator $\Psi\in\mathcal{L}(H^1([-1,0],X),X)$,

\begin{equation}\label{eqn delay operator example}
\Psi(f):=A_{1}f(-1),
\end{equation}
with $A_{1}\in\mathcal{L}(X)$. Note that a generalization to the case $\Psi(f):=\Sigma_{k=1}^{n}A_kf(-h_k)$ with $f\in H^1([-1,0],X)$, $A_k\in\mathcal{L}(X)$ and $h_k\in[0,1]$ for each $k=1,\dots,n$ is straightforward and will be omitted.

Following the procedure described in the Preliminaries section, for the system \eqref{eqn retarded non-autonomous system} we define a non-autonomous abstract Cauchy problem 

\begin{equation}\label{eqn defn non-autonomous abstract Cauchy problem}
\left\{\begin{array}{ll}
        \dot{v}(t)=\mathcal{A}v(t)+\mathcal{B}u(t)\\
        v(0)=\binom{x}{f},        \\
       \end{array}
\right.
\end{equation} 
which we consider firstly on the space $\mathcal{X}$ with $\mathcal{B}=\binom{B}{0}$, and then on its completion $\mathcal{X}_{-1}$ where the control operator $\mathcal{B}\in\mathcal{L}(U,\mathcal{X}_{-1})$. 

The delay operator $\Psi$ defined in \eqref{eqn delay operator example} is an example of a much wider class of delay operators, with which condition \eqref{eqn condition on perturbation for well-posedness} is satisfied and $(\mathcal{A},D(\mathcal{A}))$ remains a generator of a strongly continuous semigroup (see \cite[Chapter 3.3.3]{Batkai_Piazzera}). Hence \eqref{eqn defn non-autonomous abstract Cauchy problem} is well-posed and we can formally write its $\mathcal{X}_{-1}$-valued mild solution as

\begin{equation}\label{eqn k-th component abstract Cauchy problem mild solution}
 v(t)=\mathcal{T}(t)v(0)+\int_{0}^{t}\mathcal{T}(t-s)\mathcal{B}u(s)ds,
\end{equation}
where $\mathcal{T}(t)\in\mathcal{L}(\mathcal{X}_{-1})$ is the extension of the semigroup generated by $(\mathcal{A},D(\mathcal{A}))$, where the latter semigroup is given by the implicit formula \eqref{eqn perturbed semigroup in integral form}. The remining part is to find the space $\mathcal{X}_{-1}$. We begin with determination of the adjoint $\mathcal{A}^*$, with a reasoning similar to \cite[Chapter A.3.64]{Curtain_Zwart}.
\begin{prop}\label{prop adjoint of abstract A}
Let $(\mathcal{A},D(\mathcal{A}))$ be as defined by \eqref{eqn defn abstract A} and \eqref{eqn defn abstract A domain}. Then its adjoint operator $\mathcal{A}^*$ is given by

\begin{equation}\label{eqn defn adjoint of abstract A}
\mathcal{A}^*:=\left(\begin{array}{cc} 
								A^* & 0 \\ 
								\Psi^* & -\frac{d}{d\sigma}	
						\end{array}\right),
\end{equation}
with domain
\begin{equation}\label{eqn defn adjoint of abstract A domain}
D(\mathcal{A}^*):={D(A^*)\times H_{0}^1([-1,0],X)}.
\end{equation}
\end{prop}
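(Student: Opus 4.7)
The plan is to apply the defining identity of the Hilbert-space adjoint directly on the product space $\mathcal{X}$: a pair $\binom{y}{g}\in\mathcal{X}$ lies in $D(\mathcal{A}^*)$ precisely when the map $\binom{x}{f}\mapsto\langle\mathcal{A}\binom{x}{f},\binom{y}{g}\rangle_{\mathcal{X}}$ is $\mathcal{X}$-continuous on $D(\mathcal{A})$, in which case $\mathcal{A}^*\binom{y}{g}$ is its Riesz representer. Using \eqref{eqn defn of inner product on Cartesian product} and the block form \eqref{eqn defn abstract A}, the identity to satisfy is
\[
\langle Ax,y\rangle_X + \langle \Psi f,y\rangle_X + \langle f',g\rangle_{L^2} = \langle x,u\rangle_X + \langle f,h\rangle_{L^2}
\]
for all $\binom{x}{f}\in D(\mathcal{A})$, with $\binom{u}{h}=\mathcal{A}^*\binom{y}{g}$ to be identified.

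The central manipulation is integration by parts in the $L^2$ pairing. Tentatively assuming $g\in H^1([-1,0],X)$, this gives
\[
\langle f',g\rangle_{L^2} = \langle f(0),g(0)\rangle_X - \langle f(-1),g(-1)\rangle_X - \langle f,g'\rangle_{L^2},
\]
so that the bulk term $-\langle f,g'\rangle_{L^2}$ supplies the $(2,2)$-entry $-\frac{d}{d\sigma}$ of $\mathcal{A}^*$, while the two boundary terms together with $\langle\Psi f,y\rangle_X$ must be accommodated either by constraints on $g$ or by the off-diagonal $\Psi^*$ contribution. To isolate these, I would substitute $f(0)=x$ from the domain \eqref{eqn defn abstract A domain} and then test against three nested subfamilies of $D(\mathcal{A})$: (i)~$f\in H_c^1$ with $f(-1)=f(0)=0$, which forces $x=0$, kills the boundary and delay terms, and by density in $L^2$ forces $g\in H^1$ with $h=-g'$; (ii)~$f\in H^1$ with $f(-1)=0$ and $f(0)=x$ free in $D(A)$, which collapses the identity to $\langle Ax,y\rangle_X=\langle x,u-g(0)\rangle_X$ and thereby yields $y\in D(A^*)$ with the $g(0)$ contribution absorbed at the right endpoint; (iii)~$f\in H^1$ with $f(0)=0$ and $f(-1)$ ranging over $X$, which pins down the left-endpoint data of $g$ in terms of $\Psi^* y$ via $\langle\Psi f,y\rangle_X = \langle f(-1),g(-1)\rangle_X$.

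The main obstacle is giving rigorous meaning to the $(2,1)$-entry $\Psi^*$ as a map into $L^2([-1,0],X)$: for point-evaluation delays such as $\Psi f=A_1 f(-1)$ from \eqref{eqn delay operator example}, $\Psi$ has no bounded extension to $L^2$, so $\Psi^*$ is intrinsically distributional (formally $A_1^* y\otimes\delta_{-1}$), and the claimed domain $D(A^*)\times H_0^1([-1,0],X)$ should be read as absorbing the boundary data at $0$ and $-1$ produced in steps (ii) and (iii) into the endpoint conditions $g(0)=g(-1)=0$. Verifying that this domain captures every admissible pair $\binom{y}{g}$, and that no pair with $g$ merely in $H^1\setminus H_0^1$ yields an $\mathcal{X}$-valued (rather than distributional) image under $\mathcal{A}^*$, is the delicate half of the argument, paralleling the adjoint-on-product-spaces computation in \cite[A.3.64]{Curtain_Zwart}.
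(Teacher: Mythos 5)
Your overall strategy --- compute $\langle\mathcal{A}v,w\rangle_{\mathcal{X}}$ on $D(\mathcal{A})$, integrate by parts in the $L^2$ pairing, and read the adjoint off the boundary terms --- is the same as the paper's. The genuine problem is that your steps (ii) and (iii), carried out as you describe them, do not prove the stated formula; they prove a different one. In step (iii) the identity $\langle\Psi f,y\rangle_X=\langle f(-1),g(-1)\rangle_X$ for all $f(-1)\in X$ forces $g(-1)=A_1^*y$, and in step (ii) the identity $\langle Ax,y\rangle_X=\langle x,u-g(0)\rangle_X$ forces $u=A^*y+g(0)$ with no constraint on $g(0)$. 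In other words, the honest Hilbert-space adjoint in $\mathcal{X}$ that your nested-family argument produces is $\mathcal{A}^*\binom{y}{g}=\binom{A^*y+g(0)}{-g'}$ on the domain $\{(y,g):\ y\in D(A^*),\ g\in H^1([-1,0],X),\ g(-1)=A_1^*y\}$. Your closing paragraph then asserts that the boundary data should instead be ``absorbed'' into the endpoint conditions $g(0)=g(-1)=0$; this contradicts what (ii) and (iii) just established, and if you do impose $g\in H_0^1$ then the term $\langle f(-1),A_1^*y\rangle_X$ has nowhere to go and is not $L^2$-continuous in $f$, so a pair with $A_1^*y\neq0$ would not lie in the domain of the $\mathcal{X}$-valued adjoint at all. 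You have correctly located the crux (the distributional nature of $\Psi^*$), but the proposal ends holding two mutually incompatible answers and explicitly leaves ``the delicate half'' unverified.

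The paper's proof reaches the stated formula by committing to a specific convention that you never adopt: the $(2,1)$ entry $\Psi^*$ is kept as a formal object whose pairing with $f$ is the $H^1$--$(H^1)'$ duality, $\langle f,\Psi^*y\rangle=\langle\Psi f,y\rangle_X$, so the delay term is carried entirely by $\Psi^*$ and the integration-by-parts boundary terms $\langle f(-1),g(-1)\rangle_X-\langle x,g(0)\rangle_X$ must then vanish on their own for every $\binom{x}{f}\in D(\mathcal{A})$, which is exactly the condition $g\in H_0^1([-1,0],X)$. Under this reading $\mathcal{A}^*$ maps its domain into $X\times(H^1)'$ rather than into $\mathcal{X}$ --- which is what the paper actually needs, since the purpose of the computation is to identify $\mathcal{X}_{-1}$ as $X_{-1}\times H^{-1}([-1,0],X)$ --- and the second component of the image is genuinely distributional whenever $A_1^*y\neq0$. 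To repair your argument you must choose one interpretation at the outset and carry it through: either compute the true $\mathcal{X}$-adjoint and accept the $g(-1)=A_1^*y$ version (which is not the statement), or adopt the duality convention from the start, in which case steps (ii) and (iii) should conclude $g(0)=0$ and $g(-1)=0$ respectively, rather than $u=A^*y+g(0)$ and $g(-1)=A_1^*y$.
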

\begin{proof}
From $(A,D(A))$ being closed we have $D(\mathcal{A}^*)\neq\emptyset$. Let now $v=\binom{x}{f}\in D(\mathcal{A})$ and $w=\binom{y}{g}\in D(\mathcal{A}^*)$ and denote $A_{0}:=\frac{d}{d\sigma}$. Then, by the definition of adjoint operator 
	\begin{equation}\label{eqn calculation of adjoint of abstract A domain}
	\begin{split}
	\langle\mathcal{A}v,w\rangle_{\mathcal{X}}&=\langle Ax+\Psi f,y\rangle_{X}+\langle A_{0}f,g\rangle_{L^2}\\
	&=\langle x,A^*y\rangle_{X}+\langle f,\Psi^*y+A_{0}^*g\rangle_{L^2}=\langle v,\mathcal{A}^*w\rangle_{\mathcal{X}}.
	\end{split}
\end{equation}
The calculation in \eqref{eqn calculation of adjoint of abstract A domain} is correct provided that $D(\mathcal{A}^*)$ is defined in an appropriate way. Namely, assuming that $D(A^*)$ is properly defined, we have to examine only the term
\begin{equation}\label{eqn prop adjoint of abstract A calc 1}
	\begin{split}
	&\langle f,\Psi^*y+A_{0}^*g\rangle_{L^2}=\langle f,\Psi^*y-\frac{d}{dt}g\rangle_{L^2}=\int_{-1}^{0}\Big\langle f(t),\Psi^*y(t)-\frac{d}{dt}g(t)\Big\rangle_{X}dt\\
	&=\int_{-1}^{0}\Big\langle f(t),\Psi^*y(t)\Big\rangle_{X}dt-\Big\langle f(t),g(t)\Big\rangle_{X}\Big\vert_{-1}^0+\int_{-1}^{0}\Big\langle \frac{d}{dt}f(t),g(t)\Big\rangle_{X}dt.
	\end{split}
\end{equation}
Since $\Psi\in\mathcal{L}\big(H^1([-1,0],X),X\big)$ and $X$ is a Hilbert space, the domain of $\Psi^*$ is $D(\Psi^*)=X$, where we identify $X$ with its dual $X'$. This results in
\[
\int_{-1}^{0}\Big\langle f(t),\Psi^*y(t)\Big\rangle_{X}dt=\langle \Psi f,y\rangle_{X}\int_{-1}^{0}dt=\langle \Psi f,y\rangle_{X}.
\]
The remaining term of \eqref{eqn prop adjoint of abstract A calc 1} is
\begin{equation}\label{eqn prop adjoint of abstract A calc 2}
\begin{split}
&-\Big\langle f(t),g(t)\Big\rangle_{X}\Big\vert_{-1}^0+\int_{-1}^{0}\Big\langle \frac{d}{dt}f(t),g(t)\Big\rangle_{X}dt\\
&=\langle f(-1),g(-1)\rangle_{X}-\langle f(0),g(0)\rangle_{X}+\int_{-1}^{0}\Big\langle \frac{d}{dt}f(t),g(t)\Big\rangle_{X}dt\\&=\langle A_{0}f,g\rangle_{L^2}
\end{split}
\end{equation}
if and only if 
\begin{equation}\label{eqn prop adjoint of abstract A calc 3}
\langle f(-1),g(-1)\rangle_{X}-\langle x,g(0)\rangle_{X}=0\qquad\forall v\in D(\mathcal{A}).
\end{equation}
As $x$ and $f$ need only to be in $D(\mathcal{A})$, for $g$  to satisfy \eqref{eqn prop adjoint of abstract A calc 3} for every $\binom{x}{f}\in D(\mathcal{A)}$ it has to be $g\in H_{0}^1([-1,0],X)$. As $D(A^*)\subset X$ densely and $H_{0}^1([-1,0],X)\subset L^2([-1,0],X)$ densely \cite{Evans} we obtain $D(\mathcal{A}^*):=D(A^*)\times H_{0}^1([-1,0],X).$
\end{proof}
Due to the fact that $\mathcal{X}_{-1}$ is the dual to $D(\mathcal{A}^*)$ with respect to the pivot space $\mathcal{X}$, we may explicitly write 
\begin{equation}\label{eqn the extended space X-1}
 \mathcal{X}_{-1}=\big(X_{-1}\times H^{-1}([-1,0],X)\big)
\end{equation}
where $H^{-1}([-1,0],X)$ is the dual to $H_{0}^{1}([-1,0],X)$ with respect to the pivot space $L^2([-1,0],X)$ - see \cite[Chapter 2.10 and Definition 13.4.7]{Tucsnak_Weiss}.

\subsection{Contraction semigroups}

As our main tool for admissibility analysis is expressed in Proposition \ref{prop admissibility condition}, it is important to see if the delay semigroup $(\mathcal{T}(t))_{t\geq0}$ is hypercontractive, i.e., $\norm{\mathcal{T}(t)}\leq e^{\omega t}$ for every $t\geq0$ and some $\omega\in\mathbb{R}$. In the case when the operator $(A,D(A))$ in the retarded system \eqref{eqn retarded non-autonomous system} generates a contraction semigroup we start the analysis with the following 
\begin{prop}\label{prop T_0 semigroup uniform bound}
Let $\big(T(t)\big)_{t\geq0}$ be a semigroup of contractions generated by $(A,D(A))$. Then the semigroup $\big(\mathcal{T}_0(t)\big)_{t\geq0}$ generated by $(\mathcal{A}_0,D(\mathcal{A}_0))$ is hypercontractive and
\[
\norm{\mathcal{T}_{0}(t)}\leq e^{\frac{1}{2}t}\qquad\forall t\geq0. 
\] 
\end{prop}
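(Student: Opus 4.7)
The plan is to apply $\mathcal{T}_{0}(t)$ explicitly to an arbitrary $\binom{x}{f}\in\mathcal{X}$ using the block form provided by Proposition \ref{prop abstract A_0 as generator}, and to estimate the $\mathcal{X}$-norm of the result term by term. Writing
\[
\mathcal{T}_{0}(t)\binom{x}{f}=\binom{T(t)x}{S_{t}x+S_{0}(t)f},
\]
I would compute $\|\mathcal{T}_{0}(t)\binom{x}{f}\|_{\mathcal{X}}^{2}=\|T(t)x\|_{X}^{2}+\|S_{t}x+S_{0}(t)f\|_{L^{2}}^{2}$ and reduce everything to bounds on $\|x\|_{X}$ and $\|f\|_{L^{2}}$ via the contraction hypothesis $\|T(s)\|\leq 1$.

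The key structural observation is that $S_{t}x$ and $S_{0}(t)f$ have essentially disjoint supports as $L^{2}$-functions on $[-1,0]$: by the definitions, $S_{t}x$ is supported on $(\max(-t,-1),0]$, while $S_{0}(t)f$ is supported on $[-1,-t]$ when $t\leq 1$ and vanishes entirely when $t>1$. Hence the cross term drops out and
\[
\|S_{t}x+S_{0}(t)f\|_{L^{2}}^{2}=\|S_{t}x\|_{L^{2}}^{2}+\|S_{0}(t)f\|_{L^{2}}^{2}.
\]
Using contractivity of $T$, the first summand is bounded by $\int_{\max(-t,-1)}^{0}\|x\|_{X}^{2}\,d\tau=\min(t,1)\,\|x\|_{X}^{2}$, and the second summand is bounded by $\|f\|_{L^{2}}^{2}$ after a change of variables.

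Splitting into cases then finishes the argument. For $t\in[0,1]$ I would get $\|\mathcal{T}_{0}(t)\binom{x}{f}\|_{\mathcal{X}}^{2}\leq(1+t)\|x\|_{X}^{2}+\|f\|_{L^{2}}^{2}\leq e^{t}\bigl(\|x\|_{X}^{2}+\|f\|_{L^{2}}^{2}\bigr)$, using the elementary inequality $1+t\leq e^{t}$. For $t>1$ the $S_{0}(t)$-term is absent, so the right-hand side is at most $2\|x\|_{X}^{2}\leq e^{t}\bigl(\|x\|_{X}^{2}+\|f\|_{L^{2}}^{2}\bigr)$ since $e^{t}>2$ for $t>\ln 2$, a fortiori for $t>1$. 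Taking square roots yields the asserted bound $\|\mathcal{T}_{0}(t)\|\leq e^{t/2}$.

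I do not expect a genuine obstacle here; the only subtlety is spotting the disjointness of supports so that the $L^{2}$ norm squared splits additively (rather than paying a factor of $2$ from a triangle-inequality bound, which would not line up with $e^{t/2}$). Once that is noted, the estimate is a direct computation that uses only the contraction hypothesis on $T$ and the concrete forms of $S_{t}$ and $S_{0}(t)$.
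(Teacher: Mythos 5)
Your proposal is correct and follows essentially the same route as the paper's proof: expand $\|\mathcal{T}_0(t)v\|_{\mathcal{X}}^2$ using the explicit block form of $\mathcal{T}_0(t)$, observe that $S_t x$ and $S_0(t)f$ have essentially disjoint supports so the cross term vanishes, bound the remaining terms by contractivity of $T$, and conclude via $1+t\leq e^t$. The only cosmetic difference is your explicit case split at $t=1$ (via $\min(t,1)$), where the paper simply uses $(1+t)\|v\|^2< e^t\|v\|^2$ for all $t$; both are fine.
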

\begin{proof}
 Fix $t>0$ and $v=\binom{x}{f}\in\mathcal{X}$. We can calculate
\begin{equation}\label{eq:22}
 \|\mathcal{T}_{0}(t)v\|_{\mathcal{X}}^2=\norm{T(t)x}_{X}^{2}+\norm{S_{t}x}_{L^2}^{2}+2\re\langle S_{t}x,S_{0}(t)f\rangle_{L^2}+\norm{S_{0}(t)f}_{L^2}^{2}.
\end{equation}
The second term of \eqref{eq:22} expands to
\begin{equation*}
\norm{S_{t}x}_{L^2}^2=\int_{-t}^{0}\langle T(\tau+t)x,T(\tau+t)x\rangle_{X}d\tau=\int_{0}^{t}\norm{T(\tau)x}_{X}^{2}d\tau,
\end{equation*}
while the fourth one expands to
\begin{align*}
\norm{S_{0}(t)f}_{L^2}^2&=\int_{-1}^{0}\langle (S_{0}(t)f)(\tau),(S_{0}(t)f)(\tau)\rangle_{X}d\tau=
\int_{-1+t}^{0}\langle f(\tau),f(\tau)\rangle_{X}d\tau
\end{align*}
if $t\in[0,1]$ and $\norm{S_{0}(t)f}_{L^2}=0$ if $t>1$. As for the third term note that according to the definition $(S_{t}x)(\tau)=0$ for $\tau\in[-1,-t]$ and $(S_{0}(t)f)(\tau)=0$ for $\tau\in(-t,\infty)$. Hence, 
\begin{align*}
2\re\langle S_{t}x,S_{0}(t)f\rangle_{L^2}=2\re\int_{-1}^{0}\langle (S_{t}x)(\tau),(S_{0}(t)f)(\tau)\rangle_{X}d\tau=0
\end{align*}
for all $t\geq0$. The contraction assumption now gives the following estimation
\begin{align*}
&\|\mathcal{T}_{0}(t)v\|_{\mathcal{X}}^2\leq\norm{T(t)x}_{X}^{2}+\int_{0}^{t}\norm{T(\tau)x}_{X}^{2}d\tau+\int_{-1+t}^{0}\langle f(\tau),f(\tau)\rangle_{X}d\tau\\
&\leq\norm{x}_{X}^{2}+t\norm{x}_{X}^{2}+\norm{f}_{L^2}^{2}\leq (1+t)(\norm{x}_{X}^{2}+\norm{f}_{L^2}^{2})<e^{t}\norm{v}_{\mathcal{X}}^2.
\end{align*}
\end{proof}

Proposition \ref{prop T_0 semigroup uniform bound} opens up a wide field of applications of perturbation and approximation of semigroups results. 
We will continue to follow the Miyadera--Voigt approach given in Proposition \ref{prop Miyadera-Voigt sufficient condition of well posedness}.

\begin{prop}\label{prop T semigroup uniform bound}
Let $\big(T(t)\big)_{t\geq0}$ be the semigroup of contractions generated by $(A,D(A))$, $\big(\mathcal{T}_0(t)\big)_{t\geq0}$ be the semigroup generated by $(\mathcal{A}_0,D(\mathcal{A}_0))$ and suppose that $(\mathcal{A}_{\Psi},D(\mathcal{A}_{\Psi}))$ is the perturbing operator. Then for the semigroup $\big(\mathcal{T}(t)\big)_{t\geq0}$ generated by $\big(\mathcal{A}_0+\mathcal{A}_\Psi,D(\mathcal{A}_0)\big)$ the inequality
\begin{equation}\label{eqn T semigroup norm bound}
 \norm{\mathcal{T}(t)}\leq e^{\frac{1}{2}t}(1+\norm{A_1}Mt^{\frac{1}{2}})\quad\forall t\in[0,1]
\end{equation}
holds, where $A_1$ comes from~\eqref{eqn delay operator example} and $M\leq\sqrt{2}\exp^{2\norm{A_1}^2}$.

\end{prop}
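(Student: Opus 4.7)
The plan is to derive a scalar integral inequality for $\phi(t):=\norm{\mathcal{T}(t)}_{\mathcal{L}(\mathcal{X})}$ starting from the Duhamel-type representation given in Proposition \ref{prop Miyadera-Voigt sufficient condition of well posedness},
\[
\mathcal{T}(t)v = \mathcal{T}_0(t)v + \int_0^t \mathcal{T}(s)\mathcal{A}_\Psi \mathcal{T}_0(t-s)v\, ds, \qquad v=\binom{x}{f}\in D(\mathcal{A}),
\]
and to close the resulting circular dependence on $\phi$ via Gr\"onwall's inequality before bootstrapping back.

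The first step is to evaluate $\mathcal{A}_\Psi\mathcal{T}_0(r)v$ explicitly for $r\in[0,1]$ using Proposition \ref{prop abstract A_0 as generator}. The second component of $\mathcal{T}_0(r)v$ is $S_r x + S_0(r)f$; at $\sigma=-1$ the strict inequality $-r<\tau$ in the definition of $S_r$ fails whenever $r\le 1$, so $(S_r x)(-1)=0$, while $(S_0(r)f)(-1)=f(r-1)$. Hence
\[
\mathcal{A}_\Psi \mathcal{T}_0(r)v = \binom{A_1 f(r-1)}{0}, \qquad \norm{\mathcal{A}_\Psi \mathcal{T}_0(r)v}_{\mathcal{X}} \le \norm{A_1}\,\norm{f(r-1)}_X,
\]
with pointwise evaluation of $f$ legitimate because $f\in H^1([-1,0],X)$ for $v\in D(\mathcal{A})$.

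Inserting this into the Duhamel formula, using $\norm{\mathcal{T}_0(t)}\le e^{t/2}$ from Proposition \ref{prop T_0 semigroup uniform bound}, and then applying Cauchy--Schwarz in the time variable together with the change of variables $u=t-s-1$, one has $\int_0^t \norm{f(t-s-1)}_X^2\,ds = \int_{-1}^{t-1}\norm{f(u)}_X^2\,du \le \norm{f}_{L^2}^2 \le \norm{v}_{\mathcal{X}}^2$. Dividing the resulting estimate by $\norm{v}_{\mathcal{X}}$ and using density of $D(\mathcal{A})$ in $\mathcal{X}$ gives
\[
\phi(t) \le e^{t/2} + \norm{A_1}\Bigl(\int_0^t \phi(s)^2\,ds\Bigr)^{1/2}.
\]
Squaring via $(a+b)^2\le 2a^2+2b^2$ linearizes this to $\phi(t)^2 \le 2e^t + 2\norm{A_1}^2\int_0^t \phi(s)^2\,ds$, and the standard Gr\"onwall inequality (with the non-decreasing inhomogeneity $2e^t$) yields $\phi(s)^2 \le 2e^s e^{2\norm{A_1}^2 s}$. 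Feeding this back into the previous displayed estimate and bounding the integrand for $t\in[0,1]$ by $2 e^s e^{2\norm{A_1}^2}$, together with $e^t-1\le t e^t$, delivers
\[
\phi(t) \le e^{t/2}\Bigl(1+\norm{A_1}\sqrt{2t}\,e^{\norm{A_1}^2}\Bigr),
\]
which is the advertised inequality with $M=\sqrt{2}\,e^{\norm{A_1}^2}\le \sqrt{2}\,e^{2\norm{A_1}^2}$.

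The main obstacle is the circular appearance of $\phi$ on both sides of the integral inequality; the device of squaring first is what allows the use of the standard linear Gr\"onwall result (rather than a nonlinear variant) while still preserving the $t^{1/2}$ scaling on the right-hand side. A subsidiary technical point is the pointwise evaluation of the history $f$, which is why the whole computation is performed on the dense subspace $D(\mathcal{A})\subset\mathcal{X}$ and the operator-norm bound is then extended to $\mathcal{X}$ by density.
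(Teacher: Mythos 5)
Your proof is correct and follows essentially the same route as the paper's: the Miyadera--Voigt/Duhamel representation, a H\"older estimate in the time variable to extract the $t^{1/2}$ factor, and a squared integral inequality closed by the Gr\"onwall--Bellman lemma to control the semigroup norm on $[0,1]$ before bootstrapping back into the linear estimate. The only differences are minor refinements: you observe that $(S_rx)(-1)=0$ for $r\le 1$ (the paper retains the harmless extra term $\norm{A_1}\norm{x}_X$), and your Cauchy--Schwarz pairing of $\norm{\mathcal{T}(s)}$ with $\norm{f(t-s-1)}_X$ yields the marginally sharper constant $M=\sqrt{2}e^{\norm{A_1}^2}$, still consistent with the stated bound $M\le\sqrt{2}e^{2\norm{A_1}^2}$.
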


\begin{proof}
\begin{itemize}
\item[1.] From Proposition \ref{prop Miyadera-Voigt sufficient condition of well posedness} the semigroup $\big(\mathcal{T}(t)\big)_{t\geq0}$ is given by
\begin{equation}\label{eqn T semigroup in integral form}
 \mathcal{T}(t)v=\mathcal{T}_{0}(t)v+\int_{0}^{t}\mathcal{T}(s)\mathcal{A}_\Psi\mathcal{T}_{0}(t-s)vds\qquad\forall t\geq0,\ \forall v\in D(\mathcal{A}_0).
\end{equation}

Due to Proposition~\ref{prop T_0 semigroup uniform bound} the operator $(\mathcal{A}_0-\frac{1}{2}\mathcal{I},D(\mathcal{A}_0))$ generates a contraction semigroup $\big(\mathcal{T}_1(t)\big)_{t\geq0}$ on $\mathcal{X}$, where $\mathcal{T}_1(t)=e^{-\frac{1}{2}t}\mathcal{T}_0(t)$ forall $t\geq0$. In consequence, the operator $(\mathcal{A}_0+\mathcal{A}_\Psi-\frac{1}{2}\mathcal{I},D(\mathcal{A}_0))$ generates a semigroup $\big(\mathcal{T}_r(t)\big)_{t\geq0}$ on $\mathcal{X}$ where $\mathcal{T}_r(t)=e^{-\frac{1}{2}t}\mathcal{T}(t)$ for all $t\geq0$. Equation \eqref{eqn T semigroup in integral form} for the rescaled semigroup $\mathcal{T}_r(t)$ and $v=\binom{x}{f}\in D(\mathcal{A}_0)$ reads
\begin{equation}\label{eqn semigroup Tr explicit equation}
 \begin{split}
 \mathcal{T}_r(t)v=&\mathcal{T}_1(t)v+\int_{0}^{t}\mathcal{T}_r(s)\mathcal{A}_\Psi e^{-\frac{1}{2}(t-s)}\mathcal{T}_{0}(t-s)vds\\
		  =&\mathcal{T}_1(t)v+\int_{0}^{t}e^{-\frac{1}{2}(t-s)}\mathcal{T}_r(s)
\left(\begin{array}{c} 
									\Psi(S_{t-s}x)+\Psi(S_{0}(t-s)f)\\ 
									0	
									\end{array}\right)ds\\
\end{split}
\end{equation}

\item[2.] Before estimating the norm of $\mathcal{T}_r(t)$ consider the following 
\begin{equation}\label{eqn estimation of Psi T_0 on v}
 \begin{split}
  &\norm{\Psi(S_{t-s}x)+\Psi(S_{0}(t-s)f}_X\\
  &=\norm{A_1(S_{t-s}x)(-1)+A_1\big(S_{0}(t-s)f\big)(-1)}_X\\
  &\leq\norm{A_1T(-1+t-s)x)}_X+\norm{A_1f(-1+t-s)}_X\\
  &\leq\norm{A_1}\norm{x}_X+\norm{A_1}\norm{f(-1+t-s)}_X,
  \end{split}
\end{equation}
where we denote by the same symbol a continuous bounded representative of $f\in H^1([-1,0],X)$. Because of the Sobolev Embedding Theorem \ref{thm Sobolev Embedding Theorem} we know that such representative exists. Due to the H{\"o}lder inequality and again the Sobolev Embedding Theorem we have also
\begin{equation}\label{eqn estimation by Holder of int f}
 \begin{split}
  &\int_{0}^{t}\norm{f(-1+t-s)}_Xds=\int_{-1}^{-1+t}\norm{f(s)}_Xds\\
  &\leq t^{\frac{1}{2}}\Big(\int_{-1}^{-1+t}\norm{f(s)}^2_Xds\Big)^{\frac{1}{2}}\leq t^{\frac{1}{2}}\norm{f}_{L^2}.
 \end{split}
\end{equation}

\item[3.] Fix $v=\binom{x}{f}\in D(\mathcal{A}_0)$ and let $t\in[0,1]$. Using above results we have 
\begin{align*}
 &\norm{\mathcal{T}_r(t)v}\leq\norm{\mathcal{T}_1(t)v}+\int_{0}^{t}\norm{\mathcal{T}_r(s)\mathcal{A}_\Psi e^{-\frac{1}{2}(t-s)}\mathcal{T}_{0}(t-s)v}ds\\
  &\leq\norm{\mathcal{T}_1(t)v}+\int_{0}^{t}\norm{\mathcal{T}_r(s)}\Big(\norm{A_1}\norm{x}_X+\norm{A_1}\norm{f(-1+t-s)}_X\Big)ds\\
  &\leq\norm{v}+\norm{A_1}\norm{x}_X\int_{0}^{t}\norm{\mathcal{T}_r(s)}ds+\norm{A_1}\int_{0}^{t}\norm{\mathcal{T}_r(s)}\norm{f(-1+t-s)}_Xds\\
  &\leq\norm{v}+\norm{A_1}\norm{x}_{X}Mt+\norm{A_1}M\int_{0}^{t}\norm{f(-1+t-s)}_Xds\\
  &\leq\norm{v}+\norm{A_1}M\big(t\norm{x}_{X}+t^\frac{1}{2}\norm{f}_{L^2}\big)\leq\big(1+\norm{A_1}Mt^{\frac{1}{2}}\big)\norm{v}
\end{align*}
where $M:=\max\big\{\norm{\mathcal{T}_r(s)}:s\in[0,1]\big\}$.

\item[4.] Consider a square of norm estimation resulting from \eqref{eqn semigroup Tr explicit equation}, namely
\begin{align*}
 &\norm{\mathcal{T}_r(t)v}^2=\Big(\norm{\mathcal{T}_1(t)v}+\int_{0}^{t}\norm{\mathcal{T}_r(s)\mathcal{A}_\Psi e^{-\frac{1}{2}(t-s)}\mathcal{T}_{0}(t-s)v}ds\Big)^2\\
 &\leq2\norm{\mathcal{T}_1(t)v}^2+2\bigg(\int_{0}^{t}\norm{\mathcal{T}_r(s)}\Big(\norm{A_1}\norm{x}_X+\norm{A_1}\norm{f(-1+t-s)}_X\Big)ds\bigg)^2\\
 &\leq2\norm{v}_{\mathcal{X}}^2+4\norm{A_1}^2\norm{x}_X^2\Big(\int_{0}^{t}\norm{\mathcal{T}_r(s)}ds\Big)^2+\\
 &\quad+4\norm{A_1}^2\Big(\int_{0}^{t}\norm{\mathcal{T}_r(s)}\norm{f(-1+t-s)}_Xds\Big)^2\\
 &\leq2\norm{v}_{\mathcal{X}}^2+4\norm{A_1}^2\Big(t\norm{x}_X^2+\norm{f}_{L^2}^2\Big)\int_{0}^{t}\norm{\mathcal{T}_r(s)}^2ds,
\end{align*}
where we used the H{\"o}lder inequality twice. As $t\in[0,1]$ we have $t\norm{x}_X^2+\norm{f}_{L^2}^2\leq\norm{x}_X^2+\norm{f}_{L^2}^2=\norm{v}_{\mathcal{X}}^2$
 and the above estimation gives
\begin{equation}\label{eqn rescaled semigroup Tr for Gronwall lemmat}
 \norm{\mathcal{T}_r(t)}^2\leq2+4\norm{A_1}^2\int_{0}^{t}\norm{\mathcal{T}_r(s)}^2ds.
\end{equation}
The Gr{\"o}nwall--Bellman lemma (see, for example, \cite{Pachpatte} or \cite{Dragomir} for an exposition of such  inequalities) now gives
\[
 \norm{\mathcal{T}_r(t)}^2\leq2\exp^{4\norm{A_1}^2t}\qquad\forall t\in[0,1].
\]
Hence,
\begin{equation}\label{eqn rescaled semigroup Tr second norm bound}
 \norm{\mathcal{T}_r(t)}\leq\sqrt{2}\exp^{2\norm{A_1}^2t}\qquad\forall t\in[0,1],
\end{equation}
and we obtain that $M\leq\sqrt{2}\exp^{2\norm{A_1}^2}$.

\item[5.] Getting back to the original delay semigroup $\mathcal{T}(t)$ we finish the proof.
\end{itemize}
\end{proof}

\begin{cor}\label{cor log-concave bound}
Under assumptions of Proposition~\ref{prop T semigroup uniform bound} the rescaled semigroup $\mathcal{T}_r(t)$ is initially log-concave bounded, that is there exists $v:[0,1]\rightarrow[0,\infty)$ such that $v(t):=\log(N(t))\geq\log(\norm{\mathcal{T}_r(t)})$ for some function $N:[0,1]\rightarrow\RR_+$. \qed
\end{cor}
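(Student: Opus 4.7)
The plan is to read off the required majorant directly from Proposition~\ref{prop T semigroup uniform bound}. Since $\mathcal{T}_r(t)=e^{-t/2}\mathcal{T}(t)$, multiplying the estimate \eqref{eqn T semigroup norm bound} by $e^{-t/2}$ collapses it to
\[
\norm{\mathcal{T}_r(t)}\leq 1+\norm{A_1}M\sqrt{t}\qquad(t\in[0,1]),
\]
so the natural candidate is $N(t):=1+\norm{A_1}M\sqrt{t}$, and $v(t):=\log N(t)$. The properties demanded by the statement split into three: (i) $N(t)>0$, (ii) $v(t)\geq 0$, and (iii) $v(t)\geq\log\norm{\mathcal{T}_r(t)}$. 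Item (i) is immediate; item (ii) follows because $N(0)=1$ and $N$ is nondecreasing, so $v\geq 0$ on $[0,1]$; and item (iii) is the display above combined with monotonicity of $\log$.

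The remaining, and implicit, point embedded in the phrase \emph{log-concave bounded} is concavity of $v$. First I note that $t\mapsto\sqrt{t}$ is concave on $[0,1]$, whence the affine nondecreasing combination $N(t)=1+\norm{A_1}M\sqrt{t}$ is concave as well. Composition with the concave, nondecreasing function $\log$ then yields concavity of $v$ on $(0,1]$, and continuity of $v$ at the left endpoint (with $v(0)=0$) extends concavity to all of $[0,1]$; the infinite right-derivative $v'(0^+)=+\infty$ is compatible with concavity and causes no issue.

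I do not anticipate any real obstacle: the $e^{t/2}$ prefactor carried by the bound of Proposition~\ref{prop T semigroup uniform bound} is exactly the one removed by the rescaling that defines $\mathcal{T}_r$, and what remains is $1+c\sqrt{t}$, a function whose logarithm is manifestly concave by the composition rule above. The corollary is therefore a short bookkeeping step closing out the norm estimate of the previous proposition, and no further analytical input is needed.
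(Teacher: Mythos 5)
Your proposal is correct and is essentially the argument the paper intends: the corollary is stated with its proof omitted precisely because the bound $\norm{\mathcal{T}_r(t)}\leq 1+\norm{A_1}M\sqrt{t}$ is already established in step~3 of the proof of Proposition~\ref{prop T semigroup uniform bound}, and taking $N(t)=1+\norm{A_1}M\sqrt{t}$ with the standard composition rule (nondecreasing concave $\log$ of a concave function) gives the log-concave majorant. Your additional care about the endpoint $t=0$ and the infinite right-derivative is sound and does not change the conclusion.
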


With Proposition \ref{prop admissibility condition} we may state a necessary and sufficient condition for finite time admissibility of the retarded system given by \eqref{eqn retarded non-autonomous system}, namely

\begin{thm} \label{thm main thm}
Using the previously defined notation for the retarded non-autonomous dynamical system \eqref{eqn retarded non-autonomous system} let the control operator $\mathcal{B}:=\binom{B}{0}\in\mathcal{L}(U,\mathcal{X}_{-1})$, where $\dim U < \infty$, and there exist $\eta>0$ and  $\omega<\infty$ such that the inequality
\begin{equation}\label{eqn T hypercontractive semigroup norm bound}
\norm{\mathcal{T}(t)}\leq \exp^{\omega t}\quad\forall t\in(0,\eta)
\end{equation}
 holds. Then the control operator $\mathcal{B}$ is finite-time admissible if and only if there exists a constant $C>0$ such that the following resolvent condition holds
\[
\|( \lambda \mathcal{I-A})^{-1}\mathcal{B}\| \le \frac{C}{\sqrt{\re \lambda-\omega}} \qquad \forall \re \lambda > \omega.
\]

\end{thm}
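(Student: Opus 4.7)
The plan is to reduce Theorem~\ref{thm main thm} to Proposition~\ref{prop admissibility condition} via the standard rescaling argument that converts a hypercontractive semigroup into a contraction. Concretely, I set $\mathcal{T}_r(t) := e^{-\omega t}\mathcal{T}(t)$, which is generated by $(\mathcal{A} - \omega \mathcal{I}, D(\mathcal{A}))$. Because $-\omega \mathcal{I}$ is bounded, the extrapolation space $\mathcal{X}_{-1}$ built from $\mathcal{A}$ coincides with the one built from $\mathcal{A} - \omega \mathcal{I}$, so the same control operator $\mathcal{B} \in \mathcal{L}(U, \mathcal{X}_{-1})$ serves both problems.

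First I would upgrade the hypothesised local bound \eqref{eqn T hypercontractive semigroup norm bound} to the global bound $\|\mathcal{T}(t)\| \leq e^{\omega t}$ on all of $[0,\infty)$. Fix any $t_0 \in (0, \eta)$; for arbitrary $t \geq 0$, write $t = n t_0 + r$ with $n$ a non-negative integer and $r \in [0, t_0)$. The semigroup law yields
\[
\|\mathcal{T}(t)\| \leq \|\mathcal{T}(t_0)\|^n \, \|\mathcal{T}(r)\| \leq e^{n \omega t_0} \cdot e^{\omega r} = e^{\omega t},
\]
where the case $r = 0$ is the trivial identity $\|\mathcal{T}(0)\| = 1$. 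Consequently $\mathcal{T}_r$ is a genuine contraction semigroup on the separable Hilbert space $\mathcal{X} = X \times L^2([-1,0], X)$.

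With $\dim U < \infty$ and $\mathcal{X}$ separable, Proposition~\ref{prop admissibility condition} applies directly to the pair $(\mathcal{A} - \omega \mathcal{I}, \mathcal{B})$: infinite-time admissibility of $\mathcal{B}$ for $\mathcal{T}_r$ is equivalent to existence of $C > 0$ with
\[
\|(\lambda \mathcal{I} - (\mathcal{A} - \omega \mathcal{I}))^{-1}\mathcal{B}\| \leq \frac{C}{\sqrt{\re \lambda}} \qquad \forall \lambda \in \CC_+.
\]
The substitution $\mu := \lambda + \omega$ identifies the operator on the left with $(\mu \mathcal{I} - \mathcal{A})^{-1}\mathcal{B}$ and the denominator with $\sqrt{\re \mu - \omega}$, recasting the condition as precisely the one stated in the theorem on the half-plane $\re \mu > \omega$.

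To close the loop, I would transfer the equivalence back to the original semigroup using Remark~\ref{rem infinite_finite time admissibility}. The input-to-state maps for $\mathcal{T}$ and $\mathcal{T}_r$ differ only by the multiplier $e^{-\omega(\tau - s)}$, which is uniformly bounded above and below on each compact $[0, \tau]$; hence the uniform-in-$\tau$ bound obtained for $\mathcal{T}_r$ translates into a $\tau$-dependent finite-time bound for $\mathcal{T}$, and conversely. The main conceptual hurdle is verifying this last transfer cleanly, but it is essentially the content of Remark~\ref{rem infinite_finite time admissibility}; the rest of the argument is the global extension of the exponential bound and the bookkeeping of the translation $\mu = \lambda + \omega$, neither of which presents difficulty once Propositions~\ref{prop T semigroup uniform bound} and~\ref{prop admissibility condition} are in hand.
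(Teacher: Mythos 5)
Your proof is correct and follows essentially the same route as the paper's own one-line proof, which likewise reduces the theorem to Proposition~\ref{prop admissibility condition} via the rescaling $e^{-\omega t}\mathcal{T}(t)$ of Remark~\ref{rem infinite_finite time admissibility} together with the semigroup law used to globalise the bound \eqref{eqn T hypercontractive semigroup norm bound}; you have simply written out the details the paper leaves implicit. The one step both treatments pass over quickly is the converse transfer (from finite-time admissibility of $\mathcal{T}$ back to infinite-time admissibility of the rescaled contraction semigroup $e^{-\omega t}\mathcal{T}(t)$), which is where the freedom to enlarge $\omega$ so that the rescaled semigroup is exponentially stable is actually needed.
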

\begin{proof}
The proof of this theorem follows from Proposition~\ref{prop admissibility condition}, Remark~\ref{rem infinite_finite time admissibility}, Proposition~\ref{prop T semigroup uniform bound} and semigroup property.
\end{proof}

Note that \eqref{eqn T hypercontractive semigroup norm bound} in Theorem~\ref{thm main thm} does not follow from Proposition~\ref{prop T semigroup uniform bound}. The necessary and sufficient condition for \eqref{eqn T hypercontractive semigroup norm bound} to hold is
\begin{equation}\label{eqn iff condition of hypercontractivity by inner product}
\re\langle\mathcal{A}v,v\rangle_{\mathcal{X}}\leq\omega \quad\forall v\in D(\mathcal{A}).
\end{equation}
Under the relatively weak assumptions made by us (in fact in Theorem~\ref{thm main thm}, as in this whole subsection, we assume only the contraction property of the undelayed semigroup $T(t)$ and a simple form of the delay operator $\Psi$) condition \eqref{eqn iff condition of hypercontractivity by inner product} takes the form
\begin{equation}
\re\langle A_1f(-1),f(0)\rangle_{X}\leq\omega \quad\forall \binom{x}{f}\in D(\mathcal{A}),
\end{equation}
and whether one can draw conclusions on hypercontractivity under such weak assumptions remains an open problem.

A natural way of strengthening the result of Proposition~\ref{prop T semigroup uniform bound} and thus Theorem~\ref{thm main thm} would be to add a condition on the differentiability of $\mathcal{T}:[0,\eta)\rightarrow(\mathcal{L}(\mathcal{X}),\norm{\cdot}_{\mathcal{L}(\mathcal{X})})$ in the form
\begin{equation}\label{eqn bound on T derivative}
\limsup_{t\rightarrow0^+}\frac{d}{dt}\norm{\mathcal{T}(t)}_{\mathcal{L}(\mathcal{X})}<\infty.
\end{equation}
However, the question of what properties the undelayed semigroup $T(t)$ must have so that the conclusion \eqref{eqn bound on T derivative} can be drawn, remains open in the setting of this paper.

A noticeable fact is that Corollary~\ref{cor log-concave bound} says that the set of log-concave bounds for the delayed semigroup $\mathcal{T}(t)$ is not empty. Hence, another way one may look at the hypercontractivity problem is given in \cite{Davies_2005} by means of the upper log-concave envelope of $\norm{\mathcal{T}(t)}$.

\color{black}
\section{Example}\label{sec:4}
As an example of a retarded dynamical system consider a Lotka--Scharpe or the McKendrick--von Foerster equation
as in \cite[Example 3.16]{Batkai_Piazzera}. In general, it may be seen as describing a population aging with delay, where the delay can be a result of measuring time or cell development. 
\begin{equation}\label{eqn example pde with delay}
\left\{\begin{array}{ll}
        \partial_tz(t,s)+\partial_sz(t,s)=-\mu(s)z(t,s)+
\nu(s)z(t-1,s),& t\geq0,\ s\in\mathbb{R}_+\\
        z(t,0)=\int_{0}^{\infty}\beta(r)z(t,r)dr, &t\geq0,        \\
        z(t,s)=f(t,s),& (t,s)\in[-1,0]\times\mathbb{R}_+,\\
       \end{array}
\right.
\end{equation}
where $\mu,\nu,\beta\in L^\infty(\mathbb{R}_+)$; $\mu,\beta$ are positive
and $f$ is in $H^1([-1,0]\times\mathbb{R}_+)$. In the abstract setting we may specify:
\begin{itemize}
 \item the Banach space $X:=L^2(\mathbb{R}_+),$
 \item the operator $(Ag)(s):=-g'(s)-\mu(s)g(s),\ s\in\mathbb{R}_+$ with the domain \\
       $D(A):=\big\{g\in H^1(\mathbb{R}_+):g(0)=\int_{0}^{\infty}\beta(r)g(r)dr\big\};$\\
we see that $A$ generates a contraction semigroup by applying the perturbation result in \cite[Theorem III.2.7]{Engel_Nagel}.
\item the delay operator $\Psi:H^1([-1,0],X)\rightarrow X$ defined as $\Psi(f):=\nu f(-1).$
\end{itemize}
 With the above definitions we obtain an autonomous abstract system, to which we can apply a suitable control signal and obtain a well-posed abstract Cauchy problem \eqref{eqn defn abstract Cauchy problem} representing a system of the form \eqref{eqn retarded non-autonomous system}.

\section{Conclusions}\label{sec:5}
The admissibility analysis of the retarded delay system with bounded $A_1$ operator presented in this paper is a good starting point in the admissibility analysis of other state-delayed systems. In our future work particular attention among such systems will be paid to systems which have a well-known structure giving additional insight, such as diagonal systems.

The  admissibility results obtained here are also a natural starting point for the analysis of controllability or observability of state-delayed systems.

\paragraph{Acknowledgements}
This project has received funding from the European Union's Horizon 2020 research and innovation programme under the Marie Sk{\l}odowska-Curie grant agreement No 700833.\\

\centerline{\bf References}

\bibliographystyle{amsplain}

\bibliography{delay_systems_library}

\providecommand{\bysame}{\leavevmode\hbox to3em{\hrulefill}\thinspace}
\providecommand{\MR}{\relax\ifhmode\unskip\space\fi MR }
\providecommand{\MRhref}[2]{%
  \href{http://www.ams.org/mathscinet-getitem?mr=#1}{#2}
}
\providecommand{\href}[2]{#2}
\begin{thebibliography}{10}

\bibitem{Batkai_Piazzera}
A.~{Batk{\'a}i} and S.~{Piazzera}, \emph{{Semigroups for Delay Equations}},
  Research Notes in Mathematics, vol.~10, CRC Press, 2005.

\bibitem{Curtain_Zwart}
R.~F. {Curtain} and H.~J. Zwart, \emph{{An Introduction to Infinite-Dimensional
  Linear Systems Theory}}, {Texts in Applied Mathematics}, vol.~21,
  Springer-Verlag, New York, 1995.

\bibitem{Davies_2005}
E.~B. Davies, \emph{Semigroup growth bounds}, Journal of Operator Theory
  \textbf{53} (2005), 225--249.

\bibitem{Dragomir}
S.~S. {Dragomir}, \emph{{Some Gronwall Type Inequalities and Applications}},
  Nova Science Publishers, New York, 2003.

\bibitem{Engel_1999}
K.-J. Engel, \emph{Spectral theory and generator property for one-sided coupled
  operator matrices}, Semigroup Forum \textbf{58(2)} (1999), 267--295.

\bibitem{Engel_Nagel}
K.-J. {Engel} and R.~{Nagel}, \emph{{One-Parameter Semigroup for Linear
  Evolution Equations}}, Graduate Texts in Mathematics, vol. 194,
  Springer-Verlag, Berlin, 2000.

\bibitem{Evans}
L.~C. {Evans}, \emph{{Partial Differential Equations}}, {Graduate Studies in
  Mathematics}, vol.~19, American Mathematical Society, 2002.

\bibitem{Grabowski_Callier_1996}
P.~Grabowski and F.~M. Callier, \emph{Admissible observation operators,
  semigroup criteria of admissibility}, Integral Equations Operator Theory
  \textbf{25(2)} (1996), 182--198.

\bibitem{Jacob_Partington_2001}
B.~Jacob and J.~R. Partington, \emph{The {Weiss} conjecture on admissibility of
  observation operators for contraction semigroups}, Integral Equations and
  Operator Theory \textbf{40} (2001), 231--243.

\bibitem{Jacob_Partington_2004}
\bysame, \emph{Admissibility of control and observation operators for
  semigroups: A survey}, Current Trends in Operator Theory and its Applications
  (Joseph~A. Ball, J.~William Helton, Martin Klaus, and Leiba Rodman, eds.),
  Birkh{\"a}user Basel, Basel, 2004, pp.~199--221.

\bibitem{Kreuter_2015}
M.~Kreuter, \emph{Sobolev spaces of vector-valued functions}, Master's thesis,
  Ulm University, 2015, under the supervision of {W. Arendt} and {R. Zacher}.

\bibitem{Pachpatte}
B.~G. {Pachpatte}, \emph{{Inequalities for Differential and Integral
  Equations}}, Academic Press, New York, 1998.

\bibitem{Partington_Zawiski_2018b}
J.~R. {Partington} and R.~{Zawiski}, \emph{{Admissibility of state delay
  diagonal systems with one-dimensional input space}}, in preparation.

\bibitem{Tucsnak_Weiss}
M.~{Tucsnak} and G.~{Weiss}, \emph{{Observation and Control for Operator
  Semigroups}}, Birkh{\"a}user Verlag AG, Basel, 2009.

\bibitem{Weiss_1991}
G.~Weiss, \emph{Two conjectures on the admissibility of control operators},
  Estimation and Control of Distributed Parameter Systems (F.~Kappel and
  W.~Desch, eds.), Birkh{\"a}user Verlagl, Basel, 1991, pp.~367--378.

\end{thebibliography}

\end{document}